\theoremstyle{plain}
\newtheorem{theorem}{Theorem}[section]
\newtheorem{lemma}[theorem]{Lemma}
\newtheorem{corollary}[theorem]{Corollary}
\newtheorem{conjecture}[theorem]{Conjecture}
\theoremstyle{remark}
\theoremstyle{definition}
\newtheorem{definition}[theorem]{Definition}
\newtheorem{assumption}[theorem]{Assumption}
\newtheorem{remark}[theorem]{Remark}
\newcommand{\N}{{\mathbb N}}
\newcommand{\R}{{\mathbb R}}
\newcommand{\Z}{{\mathbb Z}}
\newcommand{\F}{{\mathbb F}}
\renewcommand{\a}{{\alpha}}
\newcommand{\ep}{\epsilon}
\title{On pseudo-polynomials divisible only by a sparse set of primes and $\a$-primary pseudo-polynomials}
\author{Vivian Kuperberg}
\address{Department of Mathematics, Stanford University, Stanford, CA, USA}
\email{viviank@stanford.edu}
\thanks{The author is supported by NSF GRFP grant DGE-1656518, and would like to thank Kevin Carde, Emmanuel Kowalski, Kannan Soundararajan, Avi Wigderson, and Yuval Wigderson for helpful comments and discussions.}
\begin{document}

\begin{abstract}
We explore two questions about pseudo-polynomials, which are functions $f:\N \to \Z$ such that $k$ divides $f(n+k) - f(n)$ for all $n,k$. First, for certain arbitrarily sparse sets $R$, we construct pseudo-polynomials $f$ with $p|f(n)$ for some $n$ only if $p \in R$. This implies that not all pseudo-polynomials satisfy an assumption of a recent paper of Kowalski and Soundararajan. We also consider $\a$-primary pseudo-polynomials, where the pseudo-polynomial condition is only required for $k$ lying in a set of primes of density $\a$. We show that if an $\a$-primary pseudo-polynomial is $O(e^{(2/3-\ep) n})$, then it is a polynomial.
\end{abstract}
\maketitle

\section{Introduction}

A \emph{pseudo-polynomial}, as studied by Hall in \cite{MR294300}, is a function $f: \N \to \Z$ such that for all $m,n \in \N$, $|m-n|$ divides $f(m) - f(n)$. All polynomials with integer coefficients are pseudo-polynomials, but the class of pseudo-polynomials is much larger. Hall showed in \cite{MR294300} that $f$ is a pseudo-polynomial if and only if it can be expressed as
\[f(n) = A_0 + \frac{A_1}{1!} \cdot n + \frac{A_2}{2!} \cdot n \cdot (n-1) + \frac{A_3}{3!} \cdot n \cdot (n-1) \cdot (n-2) + \cdots, \]
where $\mathrm{lcm}[1,\dots,m]|A_m$ for all $m$. Thus the class of pseudo-polynomials is large; there are uncountably many examples. Two classical examples to consider are $f_1(n) = \lfloor en! \rfloor$ (in which case $A_m = m!$ for all $m$) and $f_2(n) = (-1)^nD(n) = 1-n + n(n-1) + \cdots + (-1)^n n!$, an alternating sum of the number $D(n)$ of permutations on $n$ elements with no fixed points.

Pseudo-polynomials can also be seen through the lens of $p$-adic continuity. Let $\|\cdot\|_p$ be the $p$-adic valuation on the integers, so that $\|n\|_p = \frac 1{p^k}$ where $p^k\parallel n$. Then $f$ is a pseudo-polynomial if and only if for all $p$ and for all $m,n \in \N$, $\|f(m)-f(n)\|_p \le \|m-n\|_p$. In other words, for all primes $p$, $f$ is $p$-adically Lipschitz with constant $1$. 

If $f$ does not grow very quickly as well as satisfying these $p$-adic constraints, $f$ is often known or conjectured to be a polynomial. Hall \cite{MR294300} and Ruzsa \cite{MR323688} independently showed that if a pseudo-polynomial $f(n)$ is $O((e-1)^n)$, then $f$ must be a polynomial. Ruzsa conjectured that the same should be true if $f(n)$ is $O(c^n)$ for some $c<e$; meanwhile Hall provided an example of a pseudo-polynomial that is $O(e^n)$ but not a polynomial. Zannier \cite{MR1397665} has the best known result in this direction, showing that if $f(n)$ is $O((e^{0.75})^n)$, then $f$ must be a polynomial. Meanwhile, Bell and Nguyen \cite{MR4158738} recently proved an analogue of Ruzsa's conjecture in the function field case. Other recent work (\cite{delayguerivoal}, \cite{odesky}, \cite{odeskyrichman}) has provided new perspectives and generalizations of these problems.

In this paper, we consider two relaxations of the constraints on $f$. First, we study the behavior of a general pseudo-polynomial, without growth constriants, and construct poorly-behaved pseudo-polynomials that act very differently from polynomials. Second, we relax the $p$-adically $1$-Lipschitz constraint to hold only for a subset of primes with positive density, rather than for all primes $p$. We then consider growth constraints for these functions which require them to be polynomials.

In Section \ref{sec:crtmain}, we construct fast-growing, poorly-behaved pseudo-polynomials. Pseudo-polynomials are precisely those functions $f: \N \to \Z$ such that for every modulus $q$, $f$ restricts to a well-defined map from $\Z/q \to \Z/q$. Many properties of polynomials can be phrased in terms of these reductions mod $q$, so it is natural to ask if these properties hold for all pseudo-polynomials as well. We provide a family of examples of poorly-behaved pseudo-polynomials whose mod-$q$ behavior differs wildly from that of polynomials. We construct a pseudo-polynomial $f$ one value at a time, picking $f(n)$ based on the values $f(0), \dots, f(n-1)$ that we have already chosen. As we'll see, the pseudo-polynomial condition will still give us the leeway to pick values of $f(n)$ that resist polynomial properties. 

We now introduce the specific polynomial property under consideration. In Kowalski and Soundararajan's recent paper \cite{MR4252763}, they show equidistribution of certain subsets of $\R/\Z$. In their setting, for each prime $p$ they fix a set $A_p$ of residue classes modulo $p$, which they then use to construct subsets of $\R/\Z$. In establishing their Theorem 1.2, they impose the following assumption on the input sets $A_p$.

\begin{assumption}[\cite{MR4252763}]\label{assum:sk}
There exist constants $\a > 0$ and $x_0 \ge 2$ such that for all $x \ge x_0$,
\[\sum_{\substack{p \le x \\ |A_p| \ge 1}} \log p \ge \a x.\]
\end{assumption}

For a polynomial $f \in \Z[X]$, take $A_p$ to be the set of roots of $f$ modulo $p$. In this case, Assumption \ref{assum:sk} says that the set of primes $p$ for which $f$ has at least one root modulo $p$ is not too sparse; the Chebotarev density theorem implies that this must be true for any polynomial $f$. Kowalski and Soundararajan discuss generalizing this property to pseudo-polynomials. We will say that a pseudo-polynomial $f$ \emph{satisfies Assumption \ref{assum:sk}} if the sets $A_p = \{n \mod p: f(n) \equiv 0 \mod p\}$ satisfy Assumption \ref{assum:sk}. Kowalski and Soundararajan provide numerical evidence that the pseudo-polynomials $f_1(n) = \lfloor en! \rfloor$ and $f_2(n) = (-1)^n D(n)$ satisfy Assumption \ref{assum:sk}, and wonder if the same is true of all pseudo-polynomials. We will construct an uncountable family of pseudo-polynomials that do not satisfy Assumption \ref{assum:sk}. Our construction is as follows:

\begin{theorem} \label{main}
Let $\mathcal R \subseteq \N$ be any set of primes such that for all $Q \in \N$ and for all congruence classes $a \pmod Q$ with $(a,Q) = 1$, there exists $r \in \mathcal R$ with $r \equiv a \pmod Q$. Then there exists a pseudo-polynomial $f: \N \to \Z$ such that for all $n$ and for all primes $r$, $r|f(n)$ only if $r \in \mathcal R$. 
\end{theorem}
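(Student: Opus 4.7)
The plan is to build $f$ one value at a time, maintaining two invariants on the already-chosen values $f(0),\dots,f(n-1)$: the pseudo-polynomial condition $|k_1 - k_2| \mid f(k_1) - f(k_2)$, and that every prime factor of each $f(m)$ lies in $\mathcal{R}$. Initialize with $f(0)$ equal to any prime in $\mathcal{R}$, which exists by applying the hypothesis with $Q = 1$.

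Let $L_n = \mathrm{lcm}(1, 2, \dots, n)$. Given previous values satisfying both invariants, the constraints $f(n) \equiv f(k) \pmod{n-k}$ for $k < n$ are mutually consistent: for $k_1 < k_2 < n$, $\gcd(n-k_1, n-k_2)$ divides $k_2 - k_1$, which by the first invariant divides $f(k_2) - f(k_1)$. So by CRT they force $f(n)$ to lie in a single residue class $c_n \pmod{L_n}$, and I must pick a representative preserving the second invariant.

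The key observation is that for each prime $p \le n$, the congruence $f(n) \equiv f(n \bmod p) \pmod p$ forces $c_n \equiv f(n \bmod p) \pmod p$, so $p \mid c_n$ iff $p \mid f(n \bmod p)$; by the second invariant the latter implies $p \in \mathcal{R}$. Consequently every prime factor of $d := \gcd(c_n, L_n)$ already lies in $\mathcal{R}$. Write $c_n = d c_n'$ and $L_n = d L_n'$ with $\gcd(c_n', L_n') = 1$, treating $c_n \equiv 0 \pmod{L_n}$ by taking $d = L_n$, $L_n' = 1$. Apply the hypothesis on $\mathcal{R}$ with modulus $L_n'$ and coprime class $c_n'$ to obtain a prime $r \in \mathcal{R}$ with $r \equiv c_n' \pmod{L_n'}$, and set $f(n) := dr$. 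This lies in the forced class $c_n \pmod{L_n}$, is nonzero (so the ``every prime divides $0$'' trap is avoided), and its prime factors come from $d$ and $r$, all in $\mathcal{R}$.

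I expect the main obstacle to be spotting the key observation above. A priori, the forced residue class $c_n \pmod{L_n}$ might share a large gcd with $L_n$ involving primes outside $\mathcal{R}$, which would block the use of the $\mathcal{R}$-hypothesis after scaling down the modulus. What rescues the induction is precisely that the $p$-adic $1$-Lipschitz structure of a pseudo-polynomial makes $p \mid c_n$ equivalent to $p \mid f(n \bmod p)$: any ``bad'' small prime dividing $c_n$ is necessarily one already committed to $\mathcal{R}$ by the inductive invariant, so the argument closes cleanly.
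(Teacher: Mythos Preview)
Your proposal is correct and follows essentially the same strategy as the paper: build $f$ inductively, use CRT to pin down the forced residue class modulo $L_n=\mathrm{lcm}(1,\dots,n)$, observe via the inductive hypothesis that any small prime dividing that residue already lies in $\mathcal R$, and then replace the coprime part by a single prime from $\mathcal R$. The only cosmetic differences are that the paper phrases the CRT step using the pairwise-coprime moduli $q_p$ (largest prime power $\le n$) rather than your generalized CRT over all differences, and decomposes the forced residue as $y_1y_2$ by prime support rather than your $d\cdot c_n'$ via $\gcd$; neither change affects the argument.
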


The requirement on $\mathcal R$ here is very flexible, which lets $\mathcal R$ be arbitrarily sparse in the following sense.

\begin{corollary}\label{maincor}
Let $b:\N \to \N$ be any function. There exists a pair $(\mathcal R,f)$ where $\mathcal R = \{r_k\}_{k \in \N}$ is a set of primes such that $r_k > b(k)$ for all $k$, and $f: \N \to \Z$ is a pseudo-polynomial such that for all $n$ and for all primes $r$, $r|f(n)$ only if $r \in \mathcal R$.
\end{corollary}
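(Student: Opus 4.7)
The plan is to reduce the corollary to a straightforward application of Theorem \ref{main} together with Dirichlet's theorem on primes in arithmetic progressions. The point is that we have complete freedom to make the primes $r_k$ as large as we wish, subject only to the congruence conditions demanded by Theorem \ref{main}.

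First, I would enumerate the countable set
\[
\mathcal P = \{(a,Q) \in \N \times \N : (a,Q) = 1\}
\]
as a sequence $(a_1,Q_1), (a_2,Q_2), \dots$ (for instance by a standard diagonal enumeration on $Q$ and then $a < Q$). I would then construct the sequence $\{r_k\}$ recursively. Having chosen primes $r_1, \dots, r_{k-1}$, Dirichlet's theorem guarantees infinitely many primes congruent to $a_k$ modulo $Q_k$, so in particular there exists such a prime $r_k$ satisfying
\[
r_k > \max\bigl(b(k),\, r_1, \dots, r_{k-1}\bigr).
\]
Set $\mathcal R = \{r_k\}_{k \in \N}$. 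By construction, the $r_k$ are distinct primes with $r_k > b(k)$ for every $k$.

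Next I would verify that $\mathcal R$ satisfies the hypothesis of Theorem \ref{main}. Given any modulus $Q$ and any residue $a$ coprime to $Q$, the pair $(a,Q)$ appears in our enumeration as $(a_k, Q_k)$ for some $k$, and by construction $r_k \equiv a \pmod Q$, so $r_k \in \mathcal R$ witnesses the required congruence.

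Finally, applying Theorem \ref{main} to $\mathcal R$ produces a pseudo-polynomial $f:\N \to \Z$ with the property that every prime divisor of any $f(n)$ lies in $\mathcal R$. The pair $(\mathcal R, f)$ is then the desired object. There is no real obstacle here: the only mild point is to interleave the growth condition $r_k > b(k)$ with the enumeration of congruence classes, which is handled by the standard trick of picking $r_k$ sufficiently large at each stage using Dirichlet.
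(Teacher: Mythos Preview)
Your argument is correct and essentially identical to the paper's: both enumerate the reduced residue classes $(a,Q)$, use Dirichlet's theorem to pick a prime $r_k \equiv a_k \pmod{Q_k}$ exceeding $b(k)$, and then invoke Theorem~\ref{main}. The only cosmetic difference is that you additionally force $r_k$ to exceed the previously chosen primes to guarantee distinctness, which the paper omits but which does no harm.
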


If $b$ grows sufficiently quickly, $f$ will not satisfy Assumption \ref{assum:sk}. For example, let $b(n) = e^n$, let $R$ and $f$ be as constructed in Corollary \ref{maincor}. Crucially, $|A_p| \ge 1$ only if $p \in \mathcal R$. Thus
\[\sum_{\substack{p \le x \\ |A_p| \ge 1}} \log p < (\log x) \cdot \log x = O(\log x)^2,\]
since there are at most $\log x$ terms in the sum and each is of size at most $\log x$. Here Assumption \ref{assum:sk} is not satisfied.

\begin{remark}
The construction in Theorem \ref{main} can be modified to construct pseudo-polynomials $f$ that satisfy Assumption \ref{assum:sk}, but for which $|A_p| \ge 2$ very rarely. This can be done by considering two disjoint sets of primes $\mathcal R$ and $\mathcal S$, with $\mathcal R$ extremely sparse. Then, one can specify that $f$ has exactly one root modulo $p$ for $p \in \mathcal S$, and construct $f$ very analogously using primes in $\mathcal R$. For these pseudo-polynomials, the Theorem 1.2 of \cite{MR4252763} would apply, but its conclusion is fairly weak.
\end{remark}

\begin{remark}
The pseudo-polynomials that we construct here to fail Assumption \ref{assum:sk} grow very quickly, much faster than $f_1$ or $f_2$. It seems plausible not only that $f_1$ and $f_2$ satisfy Assumption \ref{assum:sk}, but that any pseudo-polynomial that fails Assumption \ref{assum:sk} must grow much faster.
\end{remark}

In a different direction, we also explore the following relaxation of the pseudo-polynomial definition. 
\begin{definition}
Let $0 < \alpha < 1$ and let $\mathcal P$ be a set of primes with $\prod_{\substack{p \in \mathcal P \\ p \le j}} p = e^{\alpha j} + o(e^{\alpha j})$. A \emph{$\alpha$-primary pseudo-polynomial} is a function $f:\N \to \Z$ such that for all $p \in \mathcal P$, for all $n \in \N$,  
\[f(n + p) \equiv f(n) \pmod p. \]
\end{definition}

Note that we are only considering constraints on the behavior of $f \pmod p$ for $p$ prime, rather than prime powers. This is strictly weaker than requiring that $f$ be $p$-adically $1$-Lipschitz for all $p \in \mathcal P$, but strong enough for our purposes. 

For $j \in \N$, we will denote by $\Pi(j)$ the quantity
\[\Pi(j) := \prod_{\substack{p \in \mathcal P \\ p \le j}} p,\]
where $\Pi(j)=e^{\a j}+ o(e^{\a j})$. Then
\[f(n) = \sum_{j=0}^\infty \Pi(j)\frac{n(n-1)\cdots(n-j)}{j!}\]
is an $\a$-primary pseudo-polynomial, with $f(n) = O(e^{(\a + \ep)n})$ and $f$ is not a polynomial. We conjecture the following analogue of the Ruzsa's conjecture.

\begin{conjecture}
Let $f$ be an $\a$-primary pseudo-polynomial with $f(n) = O(c^n)$ for $c < e^{\a}$. Then $f$ is a polynomial.
\end{conjecture}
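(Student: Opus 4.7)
The plan is to adapt the classical approach of Hall and Ruzsa, and its analytic refinement by Zannier, to the $\a$-primary setting. Writing $f$ in the Newton basis,
\[f(n) = \sum_{m \ge 0} A_m \binom{n}{m}, \qquad A_m \in \Z,\]
the $\a$-primary hypothesis first yields a divisibility constraint on the $A_m$. For any $p \in \mathcal P$, the Vandermonde identity together with $p \mid \binom{p}{k}$ for $1 \le k \le p-1$ gives $\binom{n+p}{m} \equiv \binom{n}{m} + \mathbf{1}[m \ge p]\binom{n}{m-p} \pmod p$, so the requirement $f(n+p) \equiv f(n) \pmod p$ becomes $\sum_{j \ge 0} A_{j+p}\binom{n}{j} \equiv 0 \pmod p$ for all $n \in \N$. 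Evaluating at $n = 0, 1, 2, \dots$ gives a unipotent triangular system forcing $p \mid A_m$ for every $m \ge p$ and every $p \in \mathcal P$, hence $\Pi(m) \mid A_m$. Combined with the finite-difference estimate $|A_m| = |\sum_{k=0}^m (-1)^{m-k}\binom{m}{k}f(k)| = O((1+c)^m)$ and $\Pi(m) = e^{\a m}(1+o(1))$, this forces $A_m = 0$ for all large $m$ as soon as $c < e^\a - 1$ — the naive $\a$-primary analog of Hall-Ruzsa.

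To push past $e^\a - 1$, I would mirror Zannier's generating-function strategy. From $\sum_n \binom{n}{m} z^n = z^m(1-z)^{-m-1}$, the ordinary generating functions $F(z) = \sum_n f(n) z^n$ and $G(w) = \sum_m A_m w^m$ satisfy $G(w) = F(w/(1+w))/(1+w)$. The hypothesis $f(n) = O(c^n)$ makes $F$ analytic in $|z| < 1/c$, which transfers via this change of variable only to analyticity of $G$ in $|w| < 1/(c+1)$, recovering the naive bound. The refinement consists in dividing out the divisibility: set $B_m := A_m/\Pi(m) \in \Z$ and $H(w) := \sum_m B_m w^m$, and combine the analytic information inherited from $F$ with the integrality of the $B_m$ — via a P\'olya-type analytic-continuation argument — to extend $H$ to a disk large enough that its integer coefficients must vanish for $m$ large, yielding the conclusion.

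The main obstacle is that closing the full gap to $c < e^\a$ is essentially equivalent, in the case $\a = 1$, to Ruzsa's classical conjecture, which has been open for half a century. Zannier's method reaches only $c < e^{3/4}$ classically, and the companion theorem of this paper attains the $\a$-primary bound $c < e^{2/3 - \ep}$ — genuinely short of $e^\a$. Going further seems to require either exploiting the \emph{simultaneous} divisibility by all primes $p \in \mathcal P$ with $p \le m$ more efficiently than the bare product bound $\Pi(m) \mid A_m$, or establishing a larger region of analyticity for $H$ through stronger integer-value arguments. Barring such a new ingredient, the outlined strategy yields only quantitative partial progress, and one should view the conjecture as a natural but difficult limit statement analogous to Ruzsa's.
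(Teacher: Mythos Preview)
You correctly identify that this statement is a conjecture: the paper does not prove it, and the case $\a = 1$ is precisely Ruzsa's long-open conjecture. Your concluding assessment that current techniques yield only partial progress, and that the full conjecture would require a genuinely new ingredient, is exactly the paper's own stance.

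Your first step is correct and clean: the Vandermonde/Lucas computation does give $p \mid A_m$ for all $m \ge p$ and $p \in \mathcal P$, hence $\Pi(m) \mid A_m$; combined with $|A_m| = O((1+c)^m)$ this forces $A_m = 0$ for large $m$ whenever $c < e^{\a} - 1$. However, the paper's partial result (Theorem~\ref{thm:zannierrho}) does not go through the Newton coefficients at all. It follows Zannier's auxiliary-function method directly: one builds $F_{\mathbf x,\mathbf y}(n) = \sum_i (x_i + y_i n) f(n+i)$, uses a box-counting bound (Lemma~\ref{lem:ANbound}) to produce a nontrivial difference $\Delta$ vanishing on an initial segment $[1,M_0]$, bootstraps this vanishing to all of $\N$ by divisibility on overlapping intervals, and only then passes to the generating function $G(X) = \sum f(m)X^m$ to conclude that $f$ is a polynomial. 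This delivers the threshold $c < e^{(2/3)\a}$, which strictly dominates your naive $e^{\a}-1$ for every $\a \in (0,1]$.

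Your proposed refinement---dividing out $\Pi(m)$ and running a P\'olya-type analytic-continuation argument on $H(w) = \sum_m (A_m/\Pi(m)) w^m$---is a plausible heuristic but, as written, does not yield a concrete exponent: you do not specify what domain of analyticity $H$ inherits beyond $|w| < 1/(c+1)$, nor how the integrality of the $B_m$ alone would extend it. The paper's auxiliary-function route is more concrete and already outperforms the Newton-coefficient baseline; if you want to pursue your generating-function angle, the substantive work would be to quantify the analytic continuation step.
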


In Section \ref{sec:zannierrho}, we use techniques of Zannier \cite{MR1397665} to show partial progress towards this conjecture for all $\a$. 

\begin{theorem}\label{thm:zannierrho}
Fix $0 < \a \le 1$ and $\ep > 0$. Let $f$ be a $\alpha$-primary pseudo-polynomial such that
\[|f(n)| = O(e^{(2/3 - \ep)\a n}).\]
Then $f$ is a polynomial.
\end{theorem}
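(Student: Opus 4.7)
The plan is to combine an explicit divisibility property for the Newton--Hall coefficients of $f$ with a Zannier-style refinement of the upper bound on those coefficients coming from the growth of $f$. Write $f(n) = \sum_{k \ge 0} A_k \binom{n}{k}$ with $A_k = (\Delta^k f)(0) \in \Z$. I first claim that $\Pi(k) \mid A_k$ for every $k$. Fix a prime $p \in \mathcal P$ with $p \le k$, and expand $E^p - 1 = \sum_{i=1}^p \binom{p}{i}\Delta^i$. Reducing modulo $p$ kills the middle binomials $\binom{p}{i}$ for $1 \le i \le p-1$, leaving the operator identity $E^p - 1 \equiv \Delta^p \pmod p$. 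Applied to $f$, the $\a$-primary condition $(E^p - 1)f(n) \equiv 0 \pmod p$ becomes $(\Delta^p f)(n) \equiv 0 \pmod p$ for every $n$, and since $A_k = \Delta^{k-p}(\Delta^p f)(0)$ is an integer linear combination of values of $\Delta^p f$, we get $p \mid A_k$. Applying this for each of the distinct primes $p \in \mathcal P \cap [1, k]$ yields $\Pi(k) \mid A_k$; hence whenever $A_k \neq 0$, $|A_k| \ge \Pi(k) = e^{\a k} + o(e^{\a k})$.

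The second ingredient is a Zannier-style upper bound: assuming $|f(n)| = O(c^n)$ with $c < e^{(2/3 - \ep)\a}$, I aim to establish $|A_k| = O(c^{\lambda k + o(k)})$ for some $\lambda = \lambda(\a, \ep) > 1$ chosen so that $c^\lambda < e^\a$. The naive estimate $|A_k| \le (1+c)^k \cdot O(1)$ obtained by inverting the Newton--Hall expansion via $A_k = \sum_{j=0}^k (-1)^{k-j}\binom{k}{j} f(j)$ only forces $f$ to be a polynomial when $c < e^\a - 1$, which is strictly weaker than the required range (one checks $e^{2\a/3} > e^\a - 1$ for all $\a \in (0, 1]$). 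Zannier's refinement, which I will adapt, represents $A_k$ as an integer linear combination of $f(0), f(1), \ldots, f(N)$ on a long interval $N \gg k$, exploiting the full divisibility $\Pi(j) \mid A_j$ for $j \le k$ together with the periodicity $f(n+p) \equiv f(n) \pmod p$ for $p \in \mathcal P$. Every occurrence of $\mathrm{lcm}(1, \ldots, k) \sim e^k$ in Zannier's combinatorial optimization is replaced by $\Pi(k) \sim e^{\a k}$, and the density $\a$ is tracked through each estimate.

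Combining the two bounds, if $A_k \neq 0$ for arbitrarily large $k$, then $e^{\a k + o(k)} \le |A_k| = O(c^{\lambda k + o(k)})$, which by the choice of $\lambda$ gives $e^\a \le c^\lambda$, contradicting $c < e^{(2/3 - \ep)\a}$. Hence $A_k = 0$ for all sufficiently large $k$ and $f$ is a polynomial. The principal obstacle is the upper bound step: Zannier's integer interpolation identities and the combinatorial optimization that produces his $3/4$ constant are built for the full pseudo-polynomial setting, where $\mathrm{lcm}(1,\ldots,k) \sim e^k$ draws contributions from every prime up to $k$. Adapting them to the sparser $\a$-primary setting, where only $\Pi(k) \sim e^{\a k}$ is available, requires rebookkeeping the density $\a$ throughout the combinatorial estimates --- and this loss of flexibility is precisely what produces the weaker constant $2/3$ in place of $3/4$.
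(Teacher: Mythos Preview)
Your divisibility claim $\Pi(k)\mid A_k$ is correct and cleanly argued. The operator identity $E^p-1\equiv\Delta^p\pmod p$ does exactly what you say, and this is a genuinely useful observation.

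The gap is the upper bound step, which is the entire difficulty of the problem. You write that Zannier's refinement ``represents $A_k$ as an integer linear combination of $f(0),\dots,f(N)$ on a long interval $N\gg k$'' and produces an improved bound $|A_k|=O(c^{\lambda k})$ with $\lambda>3/2$. That is not what Zannier does, and it is not what the paper does. There is no known direct improvement over the naive bound $|A_k|\le C(1+c)^k$: the number $A_k=(\Delta^k f)(0)$ is determined by $f(0),\dots,f(k)$ alone, and the divisibility $\Pi(j)\mid A_j$ for $j<k$ gives no leverage on the \emph{size} of $A_k$ itself. Your sketch of the key step is a placeholder, not an argument.

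The paper's route is structurally different. It never bounds $A_k$. Instead it builds auxiliary functions $F_{\mathbf x,\mathbf y}(n)=\sum_{i=1}^R(x_i+y_in)f(n+i)$ and uses a pigeonhole count (Lemma~\ref{lem:ANbound}) to find two distinct valid pairs $(\mathbf x,\mathbf y)$, $(\mathbf x',\mathbf y')$ whose difference $\Delta(n)$ vanishes on $[1,M_0]$. A divisibility bootstrap (primes $p\in\mathcal P$ in three ranges force $\Pi(m/2)\Pi(m)\Pi(m-M)^{-1}\mid\Delta(m)$) propagates this vanishing to all of $\N$. The identity $\Delta\equiv 0$ is a nontrivial linear recurrence with polynomial coefficients for $f$, hence a first-order ODE $aG'+bG+c=0$ for the generating function $G(X)=\sum f(m)X^m$; Zannier's algebraicity argument then forces $G$ to be rational with denominator a power of $1-X$, so $f$ is a polynomial. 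The constants $\mu=3/4$, $\eta=1/3$, $d=\tfrac23\a$ are tuned so that the four inequalities in Lemma~\ref{lem:parameters} hold, and this is where the density $\a$ enters and where the $2/3$ comes from. None of this is an upper bound on $A_k$; you would need to replace your second ingredient entirely.
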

When $\alpha = 1$, the constant $2/3$ is weaker than the constant $3/4$ in the result of Zannier \cite{MR1397665}. However, Theorem \ref{thm:zannierrho} has a uniform dependence on the density $\alpha$, giving a nontrivial bound for any $\alpha > 0$. Our proof closely follows Zannier's, but is slightly simpler and thus easier to generalize.

\section{Proofs of Theorem \ref{main} and Corollary \ref{maincor}}\label{sec:crtmain}

\begin{proof}[Proof of Theorem \ref{main}]
Our strategy will be as follows. We construct $f$ by inductively picking values for $f(n)$ such that for all $m < n$, $n-m$ divides $f(n) - f(m)$. Each value $f(n)$ will be constructed as a product of primes in $\mathcal R$, so we first construct $\mathcal R$ carefully. Our assumption on $\mathcal R$, that it contains an element of every reduced congruence class modulo $Q$ for all $Q \in \N$, lets us guarantee that the constraints for $f(n)$ are satisfiable by a product of elements of $\mathcal R$.

Enumerate $\mathcal R$ as $\mathcal R = \{r_k\}_{k \in \N}$. Set $f(0) = r_0$ and $f(1) = r_1$. Assume that $f(m)$ has been defined for all $m < n$; we will choose $f(n)$ so that $(n-m)|(f(n) - f(m))$ for all $m < n$.

For each prime $p \le n$, let $q_p$ be the largest power of $p$ with $q_p \le n$. Consider the equations
\[f(n) \equiv f(n-q_p) \pmod{q_p} \qquad \forall p \le n.\]
Let $Q = \prod_{p \le n} q_p$. By the Chinese Remainder theorem, there exists $y \in \N$ with $y < Q$ so that this system of equations is satisfied if and only if
\[f(n) \equiv y \pmod Q.\]
Our goal is now to show firstly that there exists a product of primes in $R$ that is congruent to $y \pmod Q$, and secondly that this is enough to guarantee $(n-m)|(f(n)-f(m))$ for all $m<n$.

We can decompose $y$ as $y = y_1y_2$, where $(y_1,Q) = 1$ and if a prime $p|y_2$ then $p|Q$, so that $y_1$ is the relatively-prime-to-$Q$ portion of $y$ and $y_2$ shares all of its prime factors with $Q$. We address $y_1$ and $y_2$ separately. Since $(y_1,Q) = 1$, $y_1 \in (\Z/Q\Z)^\times$. By construction, $\mathcal R$ contains a representative of each reduced class modulo $Q$, so there exists some $z_1 \in \mathcal R$ with $z_1 \equiv y_1 \pmod Q$.

As for the $y_2$ portion, it must already be a product of elements of $\mathcal R$. More explicitly, if a prime $p|y_2$, then $p|(y,Q)$. The only equivalence used to construct $y$ with a modulus divisible by $p$ is 
\[f(n) \equiv f(n-q_p) \pmod{q_p},\]
so $p|(y,Q)$ implies $p|f(n-q_p)$. By our induction hypothesis, this implies that $p \in \mathcal R$, so $y_2$ is a product of elements of $\mathcal R$. We can now set $f(n) = z_1 \cdot y_2$, which is then also a product of elements of $\mathcal R$ and equivalent to $y \pmod Q$.

Finally, we can check that $n-m$ divides $f(n)-f(m)$ for all $m<n$. Let $m < n$ and let $p$ be a prime with $p^k||(n-m)$. As before, let $q_p$ be the largest power of $p$ with $q_p \le n$. Note that $p^k$ divides $q_p$, since $p^k \le n-m \le n$. By construction, $f(n) \equiv f(n-q_p) \pmod{q_p}$. Assume that $m \ge n-q_p$; if $m < n-q_p$, the argument is nearly identical. By the inductive hypothesis,
\[f(m) \equiv f(m-(m-n + q_p)) \pmod{m-n+q_p},\]
and since $p^k|(m-n+q_p)$, this implies that $f(m) \equiv f(n-q_p) \pmod{p^k}$, as desired.
\end{proof}

\begin{remark}
As Avi Wigderson pointed out, this construction makes the prime factorization of $f(n)$ explicit. For a prime $p \le n$, let $q_p$ be the largest power of $p$ below $n$, as above, and let $y_p$ be the largest power of $p$ dividing $f(n-q_p)$. Moreover, let $z_1$ be as in our proof of Theorem \ref{main}. Then
\[f(n) = z_1 \cdot \prod_{\substack{p \le n \\ p \in \mathcal R}} y_p.\]
One consequence is that the construction can be tweaked slightly to ensure that $f(n)$ is squarefree for all $n$. If $n$ is the smallest value for which $f(n)$ is not squarefree, then $z_1$ must be one of the primes $p \le n$ in $\mathcal R$, so $f(n)$ being squarefree can be ensured by making $z_1 > n$ for all $n$, for example.
\end{remark}

\begin{proof}[Proof of Corollary \ref{maincor}]
By Theorem \ref{main}, it suffices to construct a set $\mathcal R = \{r_k\}_{k \in \N}$ such that $r_k > b(k)$ for all $k$ and such that for all integers $a$ and $n$ with $(a,n) = 1$, there exists $k$ such that $r_k \equiv a \pmod n$. Let $G = \{(a,n):n \ge 2, 1 \le a \le n, (a,n) = 1\}$ be the set of all pairs $(a,n)$ where $a$ is a reduced residue class modulo $n$. The set $G$ is countable, so we can enumerate it as $G = \{(a_k,n_k)\}_{k \in \N}$. For each $k \in \N$, there are infinitely many primes $r \equiv a_k \pmod{n_k}$, so let $r_k$ be a prime with $r_k \equiv a_k \pmod{n_k}$ and such that $r_k > b(k)$. Then $\mathcal R = \{r_k\}_{k \in \N}$ is precisely the desired set. 
\end{proof}

\section{Proof of Theorem \ref{thm:zannierrho}} \label{sec:zannierrho}

Our proof of Theorem \ref{thm:zannierrho} closely follows the work of Zannier \cite{MR1397665}. Throughout, we fix $0 < \alpha < 1$, a set of primes $\mathcal P$ with density $\alpha$, and a $\alpha$-primary pseudo-polynomial $f$. Recall that for $j \in \N$, we denote by $\Pi(j)$ the quantity $\Pi(j) = \prod_{\substack{p \in \mathcal P \\ p \le j}} p$,
and $\Pi(j)=e^{\a j}+ o(e^{\a j})$.

We begin with the following lemma, which is a consequence of \cite[Lemma 1]{MR733927}. 

\begin{lemma}\label{lem:ANbound}
Let $\{k_j\}_{j \in \N}$ be a sequence of positive real numbers. For $N \in \N$, let
\[A(N) = \left\{ (x_1, \dots, x_N) \in \Z^N : |x_j| \le k_j, x_{n + p} \equiv x_n \pmod p \forall p \in \mathcal P, \forall n \le N - p\right\}.\]
Then
\[\#A(N) \le \prod_{j=1}^N \left(1 + \frac{2k_j}{\Pi(j-1)}\right).\]
\end{lemma}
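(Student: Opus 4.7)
The plan is to bound $\#A(N)$ by counting, inductively, the number of admissible choices for $x_j$ given any fixed tuple $(x_1,\dots,x_{j-1})$. Concretely, I would set up the telescoping
\[
\#A(N) \;\le\; \prod_{j=1}^{N} \max_{(x_1,\dots,x_{j-1})} \#\bigl\{ x_j \in \Z : |x_j|\le k_j,\ (x_1,\dots,x_j)\text{ extends to an element of }A(N)\bigr\},
\]
so the task reduces to bounding the inner count by $1 + 2k_j/\Pi(j-1)$ for each $j$.

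Next I would identify exactly which constraints act on $x_j$ once $x_1,\dots,x_{j-1}$ are fixed. Only the conditions $x_{n+p}\equiv x_n\pmod p$ with $n+p=j$ involve $x_j$ and the earlier variables; these are precisely the congruences
\[
x_j \;\equiv\; x_{j-p} \pmod{p} \qquad \text{for each } p \in \mathcal P \text{ with } p \le j-1.
\]
Since the primes in $\mathcal P$ with $p\le j-1$ are pairwise coprime, the Chinese Remainder Theorem collapses these into a single congruence
\[
x_j \;\equiv\; c \pmod{\Pi(j-1)}
\]
for a uniquely determined residue $c$ depending on $(x_1,\dots,x_{j-1})$.

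The number of integers in the interval $[-k_j,k_j]$ lying in a fixed residue class modulo $M := \Pi(j-1)$ is at most $\lfloor 2k_j/M\rfloor + 1 \le 1 + 2k_j/\Pi(j-1)$; taking the convention $\Pi(0)=1$ handles the $j=1$ case trivially (we just get the bound $1+2k_1$, which is the total number of integers in $[-k_1,k_1]$). Any additional constraints coming from larger indices $n+p>j$ can only reduce the count, so this bound for the $x_j$-choices is valid regardless. Multiplying over $j=1,\dots,N$ gives the stated inequality.

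The only mild subtlety, which I would be careful about, is the justification of the telescoping bound: one should either argue by induction on $N$, extending an element of $A(j-1)$ to one of $A(j)$ and observing that $A(j-1)$ itself is $A$ of size $j-1$ with the same $k$'s, or equivalently note that for any upper set of constraints the number of full solutions is at most the product of the maximum number of extensions at each step. Beyond that bookkeeping, the argument is essentially a direct application of CRT plus the trivial lattice-point count in an interval, which is why the lemma is only stated as a consequence of the cited Lemma~1 of \cite{MR733927}.
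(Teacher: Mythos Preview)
Your argument is correct and complete. The paper does not actually prove this lemma; it only records it as a consequence of \cite[Lemma~1]{MR733927}, and your proof---induct on $j$, collapse the backward congruences via CRT to a single residue class modulo $\Pi(j-1)$, and count lattice points in $[-k_j,k_j]$---is exactly the standard argument behind that cited lemma, so there is nothing to compare.
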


By assumption, we have $f(n) = O(e^{(2/3 - \ep)\a n})$ and thus $n|f(n)| \le ce^{(2/3)\a n}$ for all $n \in \N$ and for a suitable constant $c$. Set $d = (2/3)\a$, so that $n|f(n)| \le ce^{dn}$. We also fix the constants $\mu = 3/4$ and $\eta = 1/3$. We will need these parameters to satisfy certain inequalities, which we collect here for convenience; the set of inequalities below is not in any sense minimal, but rather precisely what we will use later on. We omit the straightforward verifications of these inequalities.

\begin{lemma}\label{lem:parameters}
The parameters $d = \tfrac 23 \a,$ $\mu = \tfrac 34$, and $\eta = \tfrac 13$ satisfy the following properties, as well as $0 < \mu < 1$.
\begin{enumerate}[label=(\roman*),ref=\theproposition(\roman*)]
	\item \label{lem:paras.i}$\mu d + d -\alpha > 0$
	\item \label{lem:paras.ii}$\mu^2 d > \mu d + \frac{(d-\alpha)}{2}$
	\item \label{lem:paras.iii}$(\mu M + m) d < \alpha\left(\frac m2 + M\right)$ for all $M \in \N$ and all $m \in [2M, (2 + \eta)M]$
	\item \label{lem:paras.iv}$(1 + \mu) d < \alpha(1+\eta)$
\end{enumerate}
\end{lemma}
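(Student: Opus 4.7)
The plan is a direct numerical verification: substitute $d = 2\alpha/3$, $\mu = 3/4$, and $\eta = 1/3$ into each of the four inequalities, divide out $\alpha > 0$, and check that the resulting comparisons of rationals hold. Items (i), (ii), and (iv) involve no free parameters, and (iii) is linear in $m$ for fixed $M$, so the problem reduces to a finite handful of arithmetic checks. The condition $0 < \mu < 1$ is immediate from $\mu = 3/4$.

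First I would dispatch (i) and (iv) together, since both concern the same quantity $(1+\mu)d = \frac{7}{4} \cdot \frac{2}{3}\alpha = \frac{7}{6}\alpha$: the chain $\alpha < \frac{7}{6}\alpha < \frac{4}{3}\alpha = (1+\eta)\alpha$ gives both inequalities at once. For (ii), one computes $\mu^2 d = \frac{9}{16} \cdot \frac{2}{3}\alpha = \frac{3}{8}\alpha$ on the left and $\mu d + \frac{d - \alpha}{2} = \frac{\alpha}{2} - \frac{\alpha}{6} = \frac{\alpha}{3}$ on the right, so the inequality reduces to $\frac{3}{8} > \frac{1}{3}$, i.e., $9 > 8$.

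The only step that requires any care is (iii), because of the auxiliary parameters $M$ and $m$. After substitution and clearing $\alpha$, the inequality $(\mu M + m)d < \alpha(m/2 + M)$ simplifies to $\frac{M}{2} + \frac{2m}{3} < \frac{m}{2} + M$, which collapses to $\frac{m}{6} < \frac{M}{2}$, i.e., $m < 3M$. Since $m$ ranges over $[2M, (2+\eta)M] = [2M, \frac{7M}{3}]$ and $\frac{7}{3} < 3$, the strict inequality holds across the full range; the linearity of the inequality in $m$ for fixed $M$ means it suffices to check the endpoint $m = \frac{7M}{3}$.

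There is no substantive obstacle here: the lemma is bookkeeping, collecting the specific numerical inequalities that the later analytic argument will consume. The only place carelessness might enter is case (iii), where one must keep track of the parameter range and ensure the inequality is strict at the upper endpoint $m = (2+\eta)M$; the slack between $\frac{7}{3}$ and $3$ provides this with room to spare.
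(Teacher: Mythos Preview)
Your verification is correct and is exactly the ``straightforward verification'' the paper alludes to but omits. There is nothing to compare: the paper gives no proof beyond the sentence ``We omit the straightforward verifications of these inequalities,'' and your substitution-and-simplify argument supplies precisely those omitted details.
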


Let $R$ be a large positive integer to be chosen later. For $\mathbf x = (x_1, \dots, x_R)$ and $\mathbf y = (y_1, \dots, y_R) \in \Z^R$, we define an auxiliary function
\[F_{\mathbf x, \mathbf y}(n) := \sum_{i=1}^R(x_i + y_i n) f(n+i).\]
We will say that a choice of $\mathbf x$ and $\mathbf y$ is \emph{valid} if
\[|x_i| \le e^{d(R-i)}, \qquad |y_i| \le e^{d(R-i)},\]
which implies that for all $n \in \N$,
\[|F_{\mathbf x,\mathbf y}(n)| \le cR e^{d(R+n)}.\]
Let $M_0$ be a large integer so that $R = \mu M_0$ and $R$ is integral; recall that $\mu = \tfrac 34$. 

\begin{lemma}\label{lem:twopairsxy}
There exist two distinct valid choices $(\mathbf x, \mathbf y)$ and $(\mathbf x', \mathbf y')$ with $F_{\mathbf x, \mathbf y}(n) = F_{\mathbf x', \mathbf y'}(n)$ for all $n \in [1,M_0]$.
\end{lemma}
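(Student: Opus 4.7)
The approach is pigeonhole. I will lower-bound the number of valid choices $(\mathbf x, \mathbf y)$ and upper-bound the number of possible value-tuples $(F(1), \ldots, F(M_0)) \in \Z^{M_0}$. When the former strictly exceeds the latter, two distinct valid pairs must produce the same tuple.

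The first step is to observe that for any valid $(\mathbf x, \mathbf y)$, the function $F_{\mathbf x, \mathbf y}$ is itself an $\a$-primary pseudo-polynomial, since for $p \in \mathcal P$,
\[F(n+p) - F(n) = \sum_{i=1}^R \bigl((x_i + y_i n)(f(n+p+i) - f(n+i)) + p\, y_i\, f(n+p+i)\bigr),\]
and both summands are divisible by $p$. Applying Lemma \ref{lem:ANbound} with $k_n = cR\, e^{d(R+n)}$ (the valid-pair bound on $|F(n)|$) bounds the number of possible tuples $(F(1),\ldots,F(M_0))$ by $\prod_{n=1}^{M_0}(1 + 2k_n/\Pi(n-1))$. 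Using $\log(1+t) \le \log 2 + \max(0,\log t)$ and $\log \Pi(n-1) \geq \a n - o(n)$, its logarithm is at most
\[\sum_{n=1}^{M_0} \max\bigl(0,\; d(R+n) - \a n\bigr) + O(M_0 \log M_0).\]
With $R = \mu M_0$ and $d = \tfrac{2}{3}\a$, every $n \leq M_0$ satisfies $d(R+n) > \a n$ (the crossover is at $n = dR/(\a-d) = 2R > M_0$), so this sum equals
\[dR M_0 + \tfrac{d-\a}{2} M_0^2 + O(M_0) = \bigl(\mu d + \tfrac{d-\a}{2}\bigr) M_0^2 + O(M_0\log M_0).\]

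Next, I count valid pairs. For each $i \in [1,R]$, both $x_i$ and $y_i$ range over at least $e^{d(R-i)}$ integers (the interval $[-e^{d(R-i)}, e^{d(R-i)}]$ contains $\geq e^{d(R-i)}$ integers since $d(R-i) \geq 0$), so there are at least $\prod_{i=1}^R e^{2d(R-i)} = e^{dR(R-1)}$ valid pairs, of logarithm $d\mu^2 M_0^2 + O(M_0)$.

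Pigeonhole applies provided $\mu^2 d > \mu d + (d-\a)/2$, which is exactly Lemma \ref{lem:paras.ii}. Hence for $M_0$ sufficiently large the number of valid pairs strictly exceeds the number of possible value-tuples, producing the desired collision. The one subtle point is the bookkeeping for $\log\prod(1+2k_n/\Pi(n-1))$: indices $n$ with $2k_n < \Pi(n-1)$ contribute only $o(1)$ each and are absorbed into the $O(M_0\log M_0)$ error, while for the dominant range $\log(1+t)$ is comparable to $\log t$; once this is in place, the proof reduces to the single inequality supplied by Lemma \ref{lem:parameters}.
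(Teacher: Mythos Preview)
Your proof is correct and follows essentially the same pigeonhole argument as the paper: you verify that $F_{\mathbf x,\mathbf y}$ is an $\alpha$-primary pseudo-polynomial, bound the number of possible value-tuples via Lemma~\ref{lem:ANbound}, count valid pairs directly, and conclude using the inequality of Lemma~\ref{lem:paras.ii}. Your observation that the crossover $n = dR/(\alpha-d) = 2R > M_0$ forces $d(R+n) > \alpha n$ for all $n \le M_0$ is exactly the content of Lemma~\ref{lem:paras.i} as used in the paper, so your closing remark about indices with $2k_n < \Pi(n-1)$ is in fact vacuous here.
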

\begin{proof}
Let $k_j = cRe^{d(R+j)}$, so that $|F_{\mathbf x, \mathbf y}(j)| \le k_j$ for all $j$ and all valid choices of $\mathbf x$ and $\mathbf y$.  Each function $F_{\mathbf x, \mathbf y}$ satisfies $F_{\mathbf x, \mathbf y} (n + p) \equiv F_{\mathbf x, \mathbf y}(n) \pmod p$ for all $p \in \mathcal P$ and for all $n \in \N$. With $A(M_0)$ defined as in Lemma \ref{lem:ANbound}, we therefore have that the vector $(F_{\mathbf x,\mathbf y}(1), \dots, F_{\mathbf x, \mathbf y}(M_0)) \in A(M_0)$. Based on our bounds on $x_i$ and $y_i$, we can bound below the number of choices $B$ of $(\mathbf x, \mathbf y)$; we will see from our bound on $B$ and Lemma \ref{lem:ANbound} that there are two distinct elements of $B$ that yield the same element of $A(M_0)$, which will suffice.

The number of choices for each of these $\mathbf x$ and $\mathbf y$ is at least $e^{d R^2/2 + O(R)}$, so the total number of choices for $x_i$ and $y_i$ is at least $e^{d R^2 + O(R)}$. Then
\[\log B \ge d R^2  + O(R) = \mu^2 d M_0^2 + o(M_0^2).\]

Lemma \ref{lem:ANbound} gives that
\[\#A(M_0) \le \prod_{j=1}^{M_0} \left( 1 + \frac{2cRe^{d(R+j)}}{\Pi(j-1)}\right).\]
For all $j \le M_0$, we have
\[\log \left( \frac{2cR e^{d(R+j)}}{\Pi(j-1)}\right) = \mu d M_0 + dj - \alpha j + o(M_0) = \mu d M_0 + (d-\alpha)j + o(M_0).\]
By Lemma \ref{lem:paras.i}, the expression above is positive for large $M_0$, so in particular $\frac{2cR e^{d(R+j)}}{\Pi(j-1)} > 1$ for large $M_0$. But then, summing over $j$,
\begin{align*}
\log \#A(M_0) &\le \sum_{j=1}^{M_0} \log \left(\frac{4cRe^{d(R+j)}}{\Pi(j-1)}\right) \\
&\le \mu d M_0^2 + \frac{(d-\alpha)}{2} M_0^2 + o(M_0^2).
\end{align*}
By Lemma \ref{lem:paras.ii}, we have
\[\mu^2 d > \mu d + \frac{(d-\alpha)}{2},\]
so for large $M_0$ we have $\log B > \log \#A(M_0)$, and thus there are two distinct valid choices of $\mathbf x$ and $\mathbf y$ with
$(F_{\mathbf x,\mathbf y}(1), \dots, F_{\mathbf x,\mathbf y}(n)) = F_{\mathbf x', \mathbf y'}(1),\dots, F_{\mathbf x',\mathbf y'}(n))$, as desired.
\end{proof}

Let $(\mathbf x, \mathbf y)$ and $(\mathbf x', \mathbf y')$ be the pairs furnished by Lemma \ref{lem:twopairsxy}. Define $\Delta:\N \to \Z$ via $\Delta(n) = F_{\mathbf x, \mathbf y}(n) - F_{\mathbf x', \mathbf y'}(n)$. Then $\Delta$ satisfies the following:
\begin{itemize}
	\item $\Delta(n) = \sum_{i=1}^R (\tilde x_i + \tilde y_i n)f(n+i)$, where $\tilde x_i, \tilde y_i \in \Z$, and not all are zero.
	\item $\Delta(n+p) \equiv \Delta(n) \pmod p$ for all $n \in \N$ and all $p \in \mathcal P$.
	\item $|\Delta(n)| \le 2cR e^{d(R+n)}$, and $\Delta(m) = 0$ for $m = 1, 2, \dots, M_0$. 
\end{itemize}

\begin{lemma}
For $M_0$ sufficiently large and $\Delta$ as above, $\Delta(n) = 0$ for all $n \in \N$.
\end{lemma}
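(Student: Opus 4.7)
The plan is to bootstrap vanishing of $\Delta$ from $[1, M_0]$ to all of $\N$. The key observation is that $\Delta(n+p) \equiv \Delta(n) \pmod{p}$ for $p \in \mathcal P$, so $\Delta(n) \equiv \Delta(z) \pmod{p}$ for any $z \equiv n \pmod{p}$. Hence a prime $p \in \mathcal P$ divides $\Delta(n)$ whenever $n \equiv z \pmod{p}$ for some $z$ in the current zero set of $\Delta$; combining divisibility with the upper bound $|\Delta(n)| \le 2cR e^{d(R+n)}$ forces $\Delta(n) = 0$ once the corresponding prime product exceeds the bound.

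\textbf{Phase 1 (inequality (iii)).} I first show that if $\Delta$ vanishes on $[1, M]$ for some $M \ge M_0$, then $\Delta$ also vanishes on $[2M, (2+\eta)M]$. For $n$ in this range I identify three pairwise disjoint subsets of primes in $\mathcal P$ dividing $\Delta(n)$: (a) $p \le M$, automatic since $[1,M]$ surjects onto $\Z/p$; (b) $p \in (M, (n-1)/2]$, where $\lfloor n/p \rfloor = 2$ and $n - 2p \in [1, M]$ (valid since $n < 3M$); (c) $p \in [n-M, n-1]$, where $\lfloor n/p \rfloor = 1$ and $n - p \in [1, M]$ (disjoint from (b) since $n \ge 2M$). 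By the asymptotic $\Pi(j) = e^{\alpha j}(1 + o(1))$, the logarithm of the product of these primes is $\alpha(M + n/2) + o(M_0)$. Inequality (iii), applied with $m = n$, yields $d(R+n) < \alpha(M + n/2)$, so $|\Delta(n)|$ is strictly smaller than this product, forcing $\Delta(n) = 0$.

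\textbf{Phase 2 (inequality (iv)).} Using that $\Delta$ vanishes on $[1, M] \cup [2M, (2+\eta)M]$, I extend the principal vanishing interval. For $n \in (M, 2M)$, every prime in $\mathcal P \cap [1, n-1]$ divides $\Delta(n)$ via the lower part: for $p \in (M, n-1]$, $n - p \in [1, n-M-1] \subseteq [1, M]$. Moreover, every prime in $\mathcal P \cap (n-1, (2+\eta)M - n]$ divides $\Delta(n)$ via the upper part, since $n + p \in [2M, (2+\eta)M]$. The combined log-product is $\alpha((2+\eta)M - n) + o(M_0)$; the condition $d(R+n) < \alpha((2+\eta)M - n)$ holds for $n < M' := ((2+\eta)\alpha M - d\mu M_0)/(d + \alpha)$, and at $M = M_0$ the strict inequality $M' > M_0$ is precisely (iv). Iterating with $M_{k+1} = (7/5) M_k - (3/10) M_0$ gives a geometrically growing sequence.

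Once $M_k > (3/2) M_0$, the extension is trivial: for $n > M_k$, if $\Delta$ vanishes on $[1, n-1]$ then $\Pi(n-1) \mid \Delta(n)$, and the inequality $d(R+n) < \alpha(n-1)$, equivalent to $n > dR/(\alpha-d) = (3/2) M_0$, yields $|\Delta(n)| < \Pi(n-1)$, so $\Delta(n) = 0$; induction finishes. The main obstacle is phase 1 — identifying three disjoint prime ranges and verifying via PNT for $\mathcal P$ that their log-product matches the right-hand side of (iii). Phase 2 and the final induction are essentially mechanical, but rely on inequality (iv) providing strictly positive growth at $M = M_0$.
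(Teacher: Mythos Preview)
Your proof is correct and shares the paper's two key ingredients: Phase~1 (vanishing on $[2M,(2+\eta)M]$ via the three disjoint prime ranges and inequality~(iii)) is identical to the paper's, and your Phase~2 specialized to $n=M+1$ is exactly the paper's second step. The only difference is structural: the paper argues by contradiction, taking $M\ge M_0$ maximal with $\Delta=0$ on $[1,M]$ and then applying inequality~(iv) once to force $\Delta(M+1)=0$; you instead iterate forward, pushing the vanishing interval via the recursion $M_{k+1}=(7/5)M_k-(3/10)M_0$ until it exceeds $(3/2)M_0$, and then finish with the easy $\Pi(n-1)\mid\Delta(n)$ endgame. The contradiction route is cleaner—no need to track the recursion or the edge case where the upper range $(n-1,(2+\eta)M_k-n]$ in Phase~2 becomes empty—but your approach is equally valid.
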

\begin{proof}
We will follow the same arguments as in \cite{MR733927} and \cite[pp. 396--397]{MR1397665}.

Assume not. Let $M \in \N$ with $\Delta(m) = 0$ for $m \le M$, and $\Delta(M+1) \ne 0$, so that $M \ge M_0$. We will first show that $\Delta(n) = 0$ for all $n \in I := [2M, (2+\eta)M]$; recall that $\eta = \tfrac 13$. Fix $m \in I$. Let $p \in \mathcal P \cap [1,M]$. Then $\Delta$ vanishes on a complete system of classes $\pmod p$, so $p|\Delta(m)$. 

Now suppose that $p \in \mathcal P \cap [M,m/2)$. Then $0 < m-2p < m-2M \le \eta M < M$, so $\Delta(m) \equiv \Delta(m-2p) \equiv 0 \pmod p$, so $p|\Delta(m)$. Lastly, suppose that $p \in \mathcal P \cap (m-M, m)$. Then $0 < m-p < M$ and $\Delta(m) \equiv \Delta(m-p) \equiv 0 \pmod p$. Since $m-M \ge m/2$, these two sets of primes are distinct.

Thus $\Pi(m/2)\Pi(m)\Pi(m-M)^{-1}$ divides $\Delta(m)$. If $\Delta(m) \ne 0$, then
\[\log |\Delta(m)| \ge \alpha\left(\frac m2 + M \right) + o(M).\]
But, by our constraints on $\Delta$, we have
\[\log |\Delta(m)| \le (R+m)d+ o(M) \le (\mu M + m) d + o(M).\]
By Lemma  \ref{lem:paras.iii}, for all $m \in I$ we have $(\mu M + m) d < \alpha\left(\frac m2 + M\right)$, so $\Delta(m)=0$ if $M_0$ is sufficiently large.

We now show $\Delta(M+1) = 0$, which contradicts our assumption. Since $\Delta(m) = 0$ for $m \in [1,M] \cup [2M,(2+\eta)M]$, $\Delta(M+1)$ is divisible by all the primes $p \in \mathcal P \cap [1,(1+\eta)M]$. Since $\Delta(M+1) \ne 0$, we have
\[\log |\Delta(M+1)| \ge \alpha(1+\eta)M + o(M).\]
But we also have
\[\log|\Delta(M+1)| \le (\mu M + M + 1) d + o(M).\]
By Lemma \ref{lem:paras.iv}, $(1 + \mu) d < \alpha(1+\eta)$, so for large enough $M_0$, we have a contradiction. Thus $\Delta(m) = 0$ for all $m \in \N$.
\end{proof}

We are now ready to show that $f$ is in fact a polynomial. 
\begin{proof}[Proof of Theorem \ref{thm:zannierrho}]
As above, let $\Delta(m) = \sum_{i = 1}^R (\tilde x_i + \tilde y_i m)f(m+i) = 0$ for all $m >0$. Define
\[G(X) = \sum_{m > 0} f(m) X^m,\]
the generating function of our sequence $f$. Since $\sum_{m \ge 0} \Delta(m) X^m = 0$, there is a nontrivial relation
\[a(X) G'(X) + b(X)G(X) + c(X) = 0,\]
where $a,b,c$ are polynomials with integral coefficients and not all zero.

The generating function $G$ must be algebraic, by an argument that is identical to the argument by Zannier in \cite[pp.398--400]{MR1397665}. Since $G$ is algebraic, we can write
\[Q_0(X) G^h(X) + \cdots + Q_h(X) = 0\]
for polynomials $Q_i \in \Z[X]$ with $Q_0 \ne 0$. For all $p \in \mathcal P$, the reduction of $G \pmod p$ is of the form $G_p(X)/(1-X)^p$, for a polynomial $G_p \in \F_p[X]$ of degree $< p$. Write $G(X) \equiv H_p(X)/(1-X)^{e_p} \pmod p$, where $H_p \in \F_p[X]$ has degree $\le e_p$ and is not divisible by $1-X$. We can then take the algebraic equation for $G$ and reduce mod $p$, which shows that $(1-X)^{e_p}$ divides the reduction of $Q_0(X) \in \F_p[X]$; for large $p$, this reduction is not zero. Thus $e_p$ is bounded by some $E$, so $(1-X)^EG(X) \pmod p$ is a polynomial of bounded degree for all large primes $p \in \mathcal P$. Thus $(1-X)^E G(X)$ must in fact be a polynomial. Then $G(X)$ is a rational function whose denominator is a power of $(1-X)$, which implies that $f(m)$ is a polynomial for large $m$. Applying $f(n+p) \equiv f(n) \pmod p$ for large $p \in \mathcal P$ yields that $f$ is a polynomial for all $m>0$.
\end{proof}

\bibliographystyle{amsalpha}

\bibliography{pseudo}

\providecommand{\bysame}{\leavevmode\hbox to3em{\hrulefill}\thinspace}
\providecommand{\MR}{\relax\ifhmode\unskip\space\fi MR }
\providecommand{\MRhref}[2]{%
  \href{http://www.ams.org/mathscinet-getitem?mr=#1}{#2}
}
\providecommand{\href}[2]{#2}
\begin{thebibliography}{O'D20}

\bibitem[BN21]{MR4158738}
Jason~P. Bell and Khoa~D. Nguyen, \emph{An analogue of {R}uzsa's conjecture for
  polynomials over finite fields}, J. Combin. Theory Ser. A \textbf{178}
  (2021), Paper No. 105337, 11. \MR{4158738}

\bibitem[DR20]{delayguerivoal}
E.~Delaygue and T.~Rivoal, \emph{On primary pseudo-polynomials (around
  {R}uzsa's conjecture)}, 2020, preprint; hal-03083185.

\bibitem[Hal71]{MR294300}
R.~R. Hall, \emph{On pseudo-polynomials}, Mathematika \textbf{18} (1971),
  71--77. \MR{294300}

\bibitem[KS21]{MR4252763}
E.~Kowalski and K.~Soundararajan, \emph{Equidistribution from the {C}hinese
  remainder theorem}, Adv. Math. \textbf{385} (2021), Paper No. 107776, 36.
  \MR{4252763}

\bibitem[O'D20]{odesky}
A.~O'Desky, \emph{Functions with integral divided differences}, 2020, preprint;
  arXiv:2005.08833.

\bibitem[OR20]{odeskyrichman}
A.~O'Desky and D.~H. Richman, \emph{Derangements and the $p$-adic incomplete
  gamma function}, 2020, preprint; arXiv:2012.04615.

\bibitem[PZ84]{MR733927}
A.~Perelli and U.~Zannier, \emph{On recurrent mod {$p$} sequences}, J. Reine
  Angew. Math. \textbf{348} (1984), 135--146. \MR{733927}

\bibitem[Ruz71]{MR323688}
I.~Ruzsa, Jr., \emph{On congruence-preserving functions}, Mat. Lapok
  \textbf{22} (1971), 125--134. \MR{323688}

\bibitem[Zan96]{MR1397665}
U.~Zannier, \emph{On periodic mod {$p$} sequences and {$G$}-functions: on a
  conjecture of {R}uzsa}, Manuscripta Math. \textbf{90} (1996), no.~3,
  391--402. \MR{1397665}

\end{thebibliography}









\end{document}